\title{Special cases on the relative rank of G-equivariant functions over an infinite G-set}
\author{Ram\'on H. Ruiz-Medina\footnote{Email: harath.ruiz@academicos.udg.mx}  \\[1em]
\small{Centro Universitario de Ciencias Exactas e Ingenier\'ias}, \\ 
\small{Universidad de Guadalajara, Guadalajara, M\'exico.}}
\date{}
\newtheorem{teorema}{Theorem}[]
\newtheorem{lema}[teorema]{Lemma}
\newtheorem{corolario}[teorema]{Corollary}
\newcommand{\EndG}{\mathrm{End}_{G}(X)}
\newcommand{\AutG}{\mathrm{Aut}_{G}(X)}
\newcommand{\Hbox}{\mathcal{B}_{[H]}}
\newcommand{\Gbox}{\mathcal{B}_{[G]}}
\newcommand{\StabG}{\mathrm{Stab}_{G}(X)}
\newcommand{\ConjG}{\mathrm{Conj}_{G}}
\newcommand{\ConjX}{\mathrm{Conj}_{G}(X)}
\newcommand{\EndH}{\mathrm{End}_{G}(\Hbox)}
\newcommand{\EndHi}{\mathrm{End}_{G}(\ibox)}
\newcommand{\AutHi}{\mathrm{Aut}_{G}(\ibox)}
\newcommand{\EndHr}{\mathrm{End}_{G}(\rbox)}
\newcommand{\AutHr}{\mathrm{Aut}_{G}(\rbox)}
\newcommand{\EndGx}{\mathrm{End}_{G}(Gx)}
\newcommand{\AutGx}{\mathrm{Aut}_{G}(Gx)}
\newcommand{\EndGbox}{\mathrm{End}_{G}(\Gbox)}
\newcommand{\AutGbox}{\mathrm{Aut}_{G}(\Gbox)}
\newcommand{\ibox}{\mathcal{B}_{i}}
\newcommand{\rbox}{\mathcal{B}_{r}}
\newcommand{\unobox}{\mathcal{B}_{1}}
\begin{document}

\maketitle

\begin{abstract}
Given the action of a group $G$  on a set $ X $ , the set of $ G $ -equivariant functions, those that commute with the action, i.e., $ f(g \cdot x) = g \cdot f(x) $  for all $ x \in X $ , $ g \in G $ , forms a monoid under function composition. It is known that when a finite group $ G $  acts on a finite set $ X $ , a finite number of elements in this monoid are sufficient to generate it along with its group of units. The minimum number of elements required to generate the monoid is called the relative rank.
This paper presents two particular cases where $ G $  is a finite group, $ X $  is an infinite set, and the relative rank of the monoid of $ G $ -equivariant functions is finite; moreover, we compute it.\\
\textbf{Keywords:} $ G $ -set, $ G $ -equivariant function, group of units, relative rank. \\

\textbf{MSC 2020:} 20B25, 20E22, 20M20.
\end{abstract}

\section{Introduction}

In semigroup theory, rank and relative rank are widely studied objects. Given a monoid $ M $  and its group of units $ U $ , we define the relative rank of $ M $  modulo $ U $  as the minimum cardinality of a set $ W \subseteq M $  such that $ \langle U \cup W \rangle = M $ . The main goal of this work is to determine the relative rank of a certain monoid given its group of units.\\

Given a group $ G $  acting on a set $ X $ , a $ G $ -equivariant function is a function $ f: X \rightarrow X $  such that $ f(g \cdot x) = g \cdot f(x) $  for all $ g \in G $ , $ x \in X $ . The set of all $ G $ -equivariant functions forms a monoid under function composition. We call $ X $  a $ G $ -set and endomorphisms of $ X $  all $ G $ -equivariant functions. We denote by $ \EndG $  the monoid of endomorphisms of $ X $ , and by $ \AutG $  its group of units, which consists of all bijective $ G $ -equivariant functions.

We work with a finite group $ G $  acting on a set $ X $ , and construct a subset in $ \EndG $  such that this subset generates $ \EndG $  modulo $ \AutG $ . We assert that this subset has the minimal cardinality of any such set with this property. We construct this subset based on a subset of $ X $  and the restriction of endomorphisms to this subset.\\

Given the action of a group $ G $  on a set $ X $ , we recall the $ G $ -orbits and the stabilizer of elements in $ X $  as follows, for $ x \in X $ :
$$  Gx := \{ g \cdot x \mid g \in G \}, $$ 
$$  G_{x} := \{ g \in G \mid g \cdot x = x \}, $$ 
and based on the stabilizer, we define the following sets in $ X $ . Given $ H \leq G $ , let:
$$  \mathcal{B}_{H} := \{ x \in X \mid G_{x} = H \}, $$ 
$$  \Hbox := \{ x \in X \mid [G_{x}] = [H] \}. $$

\noindent Denote $ X/G $  and $ \Hbox/ G $  as the orbits of $ G $  in $ X $  and $ \Hbox $  respectively.\\

For a finite group $ G $ , denote $ [H_{1}], [H_{2}],...,[H_{r}] $  as all conjugacy classes of subgroups of $ G $ , ordered by their cardinality as:
$$  |H_{1}| \leq |H_{2}| \leq ... \leq |H_{r}|. $$ 

In the case where $ G $  is a finite group, we simplify the notation of sets $ \mathcal{B}_{[H_{i}]} $  as $ \mathcal{B}_{[H_{i}]}=\ibox $ . Note that some conjugacy classes may not be included in the stabilizer set of the action of $ G $  on $ X $ ,
$$  \StabG:=\{G_{x}|\ x\in X\}. $$

However, we must note that if $ H \in \StabG $ , the complete conjugacy class of $ H $ , $ [H] $ , is contained in $ \StabG $ , because if $ h \in G_{x} $  for some $ x \in X $ , it holds that 
$$  (g^{-1}hg) \cdot (g^{-1}\cdot x) = g^{-1}\cdot x, \ \forall g \in G, $$ 
meaning any conjugate of $ h $  stabilizes at least one element in $ X $ .\\

The following result is well known in the theory of $ G $ -equivariant functions. 
\begin{lema}\label{lema1}
Let $ G $  be a group acting on a set $ X $ , given $ x,y \in X $ , the following holds: 

\begin{enumerate} 
\item[i)] There exists a $ G $ -equivariant function $ \tau \in \EndG $  such that $ \tau(x)=y $  if and only if $ G_{x} \leq G_{y} $ .
\item[ii)] There exists a bijective $ G $ -equivariant function $ \sigma \in \AutG $  such that $ \sigma(x)=y $  if and only if $ G_{x} = G_{y} $ .
\end{enumerate}
\end{lema}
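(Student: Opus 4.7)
\medskip

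\noindent\textbf{Proof plan.} The two items are handled by similar constructions, and I would prove (i) first, then bootstrap to (ii).

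For the forward implication of (i), the plan is to exploit equivariance directly: if $\tau \in \EndG$ satisfies $\tau(x) = y$, then for any $g \in G_x$ one has $g \cdot y = g \cdot \tau(x) = \tau(g \cdot x) = \tau(x) = y$, giving $g \in G_y$. For the converse, assume $G_x \leq G_y$ and try to define $\tau$ on the orbit $Gx$ by the formula $\tau(g \cdot x) := g \cdot y$. The key well-definedness check is the only nontrivial point: if $g_1 \cdot x = g_2 \cdot x$, then $g_2^{-1}g_1 \in G_x \leq G_y$, so $g_2^{-1}g_1 \cdot y = y$, i.e.\ $g_1 \cdot y = g_2 \cdot y$. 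Equivariance of $\tau$ on $Gx$ is then automatic from the definition. To extend $\tau$ to all of $X$, I would simply set $\tau$ to be the identity on every orbit other than $Gx$; the identity is trivially $G$-equivariant, and pasting equivariant maps defined on disjoint $G$-invariant subsets yields an equivariant map on the union.

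For (ii), the forward direction is an immediate consequence of (i) applied twice: $\sigma(x) = y$ gives $G_x \leq G_y$, and $\sigma^{-1}(y) = x$ (which is itself $G$-equivariant, since the inverse of an equivariant bijection is equivariant) gives $G_y \leq G_x$. For the converse, the construction from (i) already yields a bijection $\tau\colon Gx \to Gy$ with inverse $g\cdot y \mapsto g\cdot x$, which is well-defined by symmetry since $G_x = G_y$. I would then split into two cases: if $Gx = Gy$, the map $\tau$ is already a bijection of that orbit with itself and I extend by the identity elsewhere; if $Gx \neq Gy$, I define $\sigma$ to swap these two orbits (using $\tau$ one way and its inverse the other) and the identity on the remaining orbits, producing a globally $G$-equivariant bijection.

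The only genuine subtlety is the well-definedness check on $Gx$, which is really the heart of the lemma; everything else is a matter of packaging equivariant pieces together on $G$-invariant subsets. No choice-style obstruction arises because the identity is a canonical equivariant extension outside the orbits we care about.
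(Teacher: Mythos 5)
Your proof is correct and is essentially the same argument underlying the paper: this paper states Lemma \ref{lema1} without proof (citing \cite{paper}), but your orbit-wise construction $g\cdot x\mapsto g\cdot y$, extended by the identity (or by the inverse map on $Gy$ for the bijective case), is exactly the standard proof and coincides with the maps $[x\mapsto y]$ and $(x\leftrightarrow y)$ that the paper itself introduces in Section 3. The well-definedness check via $g_2^{-1}g_1\in G_x\leq G_y$ is indeed the heart of the matter, and your case split in (ii) according to whether $Gx=Gy$ handles the extension correctly.
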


Further details on this result can be found in \cite{paper}.\\

Denote the set of all conjugacy classes of subgroups of $ G $  as $ \ConjG $ , and denote them as follows: $ [H]:=\{g^{-1}Hg:\ g\in G\} $ . Given subgroups $ H $  and $ N $  of $ G $ , we define the $ N $ -conjugacy classes of $ H $  as:
$$  [H]_{N}:=\{n^{-1}Hn:\ n\in N\}. $$ 
It is easy to see that $ [H]_{N} \subseteq [H] $ , meaning elements in an $ N $ -conjugacy class of $ H $  are some of the conjugate subgroups of $ H $ , specifically those given by conjugating elements in $ N $ . Denote the normalizer of a subgroup $ H $  as $ N_{G}(H)=N_{H} $ .\\

Let's recall some important definitions for properties of functions that are useful for our objectives. Let $ f:X\rightarrow X $  be a function, the range of $ f $  is the cardinality of its image, $ r(f)=|Im(f)| $ . It is known that the following property holds:
$$  r(f\circ g)\leq \min\{r(f),r(g)\}. $$ 
The defect of $ f $  is defined as the cardinality of the complement of the image of $ f $ , $ def(f)=|X\setminus Im(f)|=|X|- r(f) $ . The kernel of $ f $  is given by:
$$  ker(f):=\{(x,y)\in X\times X:\ f(x)=f(y)\}. $$ 
The kernel of $ f $  defines an equivalence relation on $ X $ . If we consider the equivalence class of an element $ x\in X $ , $ [x]_{f}:=\{y\in X:\ f(y)=x\} $ , the infinite contraction index of $ f $  is defined as the number of classes in the kernel of size $ |X| $ , i.e.,
$$  k(f):=|\{[x]_{f}\in X/ker(f):\ |[x]_{f}|=|X|\}|. $$ 
Note that if $ X $  is a finite set, $ \kappa(f)\neq 0 $  if and only if $ f $  is a constant function, and $ \kappa(f)=1 $ . This quantity is interesting and useful only when $ X $  is an infinite set. \\

Given subgroups $ H,K\leq G $ , such that $ H\leq K $ , we call a function $ \tau \in \EndG $  an elementary collapse of type $ (H,[K]_{N_{[H]}}) $  if for any $ x\in X $  such that $ G_{x}=H $ , the following conditions hold:

\begin{enumerate}
\item[1.]  $ [G_{\tau(x)}]_{N_{H}}=[K]_{N_{H}} $ .
\item[2.] $ ker(\tau)=\{(g\cdot x,g\cdot \tau(x)),(g\cdot \tau(x), g\cdot x):\ g\in G\}\cup\{(a,a):\ a\in X\} $ 
\end{enumerate}

The following results are presented in \cite{paper}.

\begin{lema}
    Let $ G $  be a group acting on a set $ X $ , the number of distinct types of elementary collapses is given by:
    $$  \sum_{[H]\in \ConjG}{|U(H)|}-|\kappa_{G}(X)| $$ 
    
where $ \kappa_{G}(X) $  is the set of conjugacy classes of subgroups of $ G $  such that $ \Hbox $  contains only one orbit of $ G $ , and $ U(H):=\{[K]_{N_{H}}|\ H\leq K\} $  is the set of elementary collapses of $ H $  into larger subgroups included in the action of $ G $  on $ X $ .
\end{lema}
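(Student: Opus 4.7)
The plan is to enumerate the formal candidate types $(H, [K]_{N_H})$ with $H \leq K$ and $[H], [K]$ both occurring in $\StabG$, and then to determine which are actually realized by some $\tau \in \EndG$. Grouped by the source class $[H] \in \ConjG$, the set $U(H)$ parameterizes the valid second coordinates, so the total number of candidate pairs is $\sum_{[H]} |U(H)|$. The task then reduces to showing that every candidate is realized except for the diagonal pair $([H], [H]_{N_H})$ when $[H] \in \kappa_G(X)$, accounting for exactly $|\kappa_G(X)|$ exclusions.

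\medskip

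\noindent For realizability, I would pick $x \in \mathcal{B}_H$ and a representative $y \in X$ of $[K]_{N_H}$ with $G_y$ containing $H$ (possible by the definition of $N_H$-conjugacy); Lemma~\ref{lema1}(i) then supplies an equivariant map sending $x$ to $y$, which I extend to $\tau$ by folding $Gx$ onto $Gy$ via $g \cdot x \mapsto g \cdot y$ and setting $\tau$ equal to the identity on $X \setminus Gx$. Well-definedness of the fold uses $H \leq G_y$, equivariance is immediate from the orbit-wise definition, and the kernel matches condition~2 as long as $Gx$ and $Gy$ are disjoint, which is automatic when $K \neq H$. For $K = H$, disjointness can be arranged precisely when $\Hbox$ has more than one $G$-orbit, i.e., precisely when $[H] \notin \kappa_G(X)$.

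\medskip

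\noindent For the remaining case $K = H$ with $[H] \in \kappa_G(X)$, $\Hbox$ is the single $G$-orbit $Gx$, so any $y \in \mathcal{B}_H$ must have the form $g_0 \cdot x$ with $g_0 \in N_H$. The relation $(x, y) \in \ker(\tau)$ from condition~2 together with equivariance forces $g_0 \in G_y = H$, so $y = x$ and $\tau|_{Gx}$ is the identity, which is not a genuine elementary collapse. Subtracting these $|\kappa_G(X)|$ non-realized candidates from $\sum_{[H]} |U(H)|$ yields the claimed formula. The main obstacle will be the kernel bookkeeping for $K$ strictly larger than $H$: the equivalence closure of the pairs in condition~2 contains extra identifications $(g_1 \cdot x, g_2 \cdot x)$ with $g_2^{-1} g_1 \in K \setminus H$, and one must check carefully that these match the actual kernel of the constructed $\tau$ without introducing further identifications on $X \setminus Gx$.
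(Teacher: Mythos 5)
This lemma is stated in the paper without proof --- it is imported from the cited reference \cite{paper} --- so there is no in-paper argument to compare against; judged on its own, your enumerate-and-realize scheme is the natural one and matches how the paper itself later uses the maps $[x_{i}\mapsto y_{i,j}]$ and $[x_{i}\mapsto x'_{i}]$ as the collapses of each type, with the single-orbit classes in $\kappa_{G}(X)$ as the only unrealizable (diagonal) types. Your outline is correct, including the exclusion argument forcing $g_{0}\in G_{y}=H$; the one loose end you flag (the kernel of the fold $g\cdot x\mapsto g\cdot y$ versus the pair set in condition 2) closes immediately, since for $y\notin Gx$ the kernel of $[x\mapsto y]$ is exactly the equivalence relation generated by the pairs $(g\cdot x,g\cdot y)$, the extra identifications $(g_{1}\cdot x,g_{2}\cdot x)$ with $g_{2}^{-1}g_{1}\in G_{y}$ arising by transitivity, which is how the definition must be read anyway for the paper's own examples to qualify as elementary collapses.
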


\begin{teorema}
Let $ G $  be a finite group acting on a finite set $ X $ , then any subset $ U\subseteq \EndG $  that generates $ \EndG $  modulo $ \AutG $  must contain an elementary collapse of each possible type.
\end{teorema}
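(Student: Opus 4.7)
Fix a type $\theta = (H,[K]_{N_H})$, choose a specific elementary collapse $\tau \in \EndG$ of type $\theta$, and argue by contradiction: assume $U \cup \AutG$ generates $\EndG$ but no element of $U$ is an elementary collapse of type $\theta$. Writing $\tau$ in alternating form
\[
\tau \;=\; \alpha_0\, u_1\, \alpha_1\, u_2\, \alpha_2 \cdots u_m\, \alpha_m, \qquad \alpha_i \in \AutG,\ u_j \in U,
\]
we have $m \geq 1$ since $\tau$ is not bijective. The aim is to force some $u_i$ to be an elementary collapse of type $\theta$, contradicting the standing hypothesis.

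Two invariants drive the argument. First, elements of $\AutG$ preserve stabilizers exactly: $G_{\alpha(x)} = G_x$ for all $\alpha \in \AutG$, so automorphisms preserve the layer $\mathcal{B}_H$ setwise and act trivially on the list of types. Second, kernels behave rigidly under composition: for $\beta \in \AutG$ one has $\ker(\beta \circ f) = \ker f$, while $\ker(f \circ \beta)$ is only a bijective relabeling of $\ker f$; in general $\ker g \subseteq \ker(f \circ g)$. Combined with Lemma~\ref{lema1}, these facts imply that the $G$-invariant equivalence relation $\ker \tau$ has very few $G$-invariant sub-equivalence relations: besides the diagonal and $\ker \tau$ itself, only a single controlled refinement is possible (splitting the $G\tau(x)$-fixed point from the attached $Gx$-block inside each non-trivial class).

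I would proceed by induction on $m$. The base case $m = 1$ is immediate: $u_1 = \alpha_0^{-1}\tau\alpha_1^{-1}$ is a conjugate of $\tau$ by automorphisms, and by the exact stabilizer-preservation property $u_1$ is again an elementary collapse of type $\theta$, contradicting the hypothesis. For the inductive step $m \geq 2$, I examine $u_m\alpha_m$: its kernel sits inside $\ker \tau$ and is non-diagonal (else $u_m \in \AutG$). In the principal case $\ker(u_m \alpha_m) = \ker \tau$, the prefix $\alpha_0 u_1 \cdots u_{m-1}\alpha_{m-1}$ acts injectively on $\mathrm{Im}(u_m\alpha_m)$, and because this injection is a $G$-equivariant bijection between two $G$-invariant subsets of $X$ of the same isomorphism type (each obtained by removing an orbit of type $[H]$), it extends to some $\gamma \in \AutG$ with $\tau = \gamma\, u_m \alpha_m$. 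This reduces $m$ and the induction hypothesis applies. The remaining refined-kernel case is handled by a parallel analysis along the stabilizer inclusion lattice of Lemma~\ref{lema1}: the partial collapse performed by $u_m\alpha_m$ must be completed by the prefix, and tracing the completion forces one of the earlier $u_i$ to realize the full type $\theta$.

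The step I expect to be the main obstacle is the bookkeeping of the refined invariant $[K]_{N_H}$ rather than the coarser $G$-conjugacy class $[K]$. Since $N_H$-conjugacy is strictly finer than $G$-conjugacy on subgroups containing $H$, one has to verify that every manipulation, in particular the extension to $\gamma \in \AutG$, preserves the $N_H$-class of the target stabilizer $G_{u_i(x)}$. This reduces to checking that $\AutG$ acts on representatives in $\mathcal{B}_H$ through $N_H/H$, so that $N_H$-conjugacy classes of target stabilizers are invariant under automorphism conjugation. This refined bookkeeping is precisely what distinguishes the statement from the coarser one involving only $[K]$, and it is what makes the count of elementary collapse types in the preceding lemma the correct lower bound on $|U|$.
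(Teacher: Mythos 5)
First, a caveat: this theorem is not proved in the present paper at all; it is quoted from \cite{paper} (the statement appears right after ``The following results are presented in \cite{paper}''), so your attempt can only be measured against the result itself, not against an in-paper argument. With that understood, parts of your skeleton are sound: writing $\tau=\alpha_0u_1\alpha_1\cdots u_m\alpha_m$, absorbing invertible factors, observing that $\ker(u_m\alpha_m)$ is a $G$-invariant equivalence relation contained in $\ker\tau$, and, in your ``principal case'' $\ker(u_m\alpha_m)=\ker\tau$, extending the injective action of the prefix on $\mathrm{Im}(u_m\alpha_m)$ to some $\gamma\in\AutG$ (legitimate for finite $G$-sets, since $\mathrm{Im}(u_m\alpha_m)\cong X/\ker\tau\cong\mathrm{Im}(\tau)$ forces the complements to be $G$-isomorphic by uniqueness of the orbit decomposition) — all of that works, modulo the remark that with the paper's literal condition~2 even $\alpha\tau\beta$ with $\alpha,\beta\in\AutG$ need not be an elementary collapse (the kernel partner $g\cdot\tau(x)$ and the image get decoupled), so your base case already requires the kernel-based reading of the definition.

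The genuine gap is your structural claim about refinements of $\ker\tau$, and the deferred ``refined-kernel case'' built on it. When $H<K$, the non-trivial classes of $\ker\tau$ for a collapse such as $[x\mapsto y]$ (with $G_x=H$, $G_y=K$) are not pairs: the class of $g\cdot y$ is $\{g\cdot y\}\cup\{gk\cdot x: k\in K\}$, of size $[K:H]+1$, and for \emph{every} intermediate subgroup $H\leq L\leq K$ the block system identifying $g\cdot x$ with $g\ell\cdot x$, $\ell\in L$, is a $G$-invariant equivalence relation strictly between the diagonal and $\ker\tau$. So there is not ``a single controlled refinement'': the first non-injective factor $u_m\alpha_m$ may perfectly well be an elementary collapse of a strictly smaller type $(H,[L]_{N_H})$, and the whole content of the theorem is to show that the remaining factors cannot complete the collapse to type $(H,[K]_{N_H})$ without some factor of that exact type. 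Your outline offers no argument here beyond ``a parallel analysis \dots forces one of the earlier $u_i$ to realize the full type.'' To see where the work lies, take $G=\mathbb{Z}_4$ and $X=G/\{0\}\sqcup G/\{0,2\}\sqcup G/G$, with $\tau$ the collapse of the free orbit onto the fixed point, and the candidate set consisting only of the collapses $G/\{0\}\to G/\{0,2\}$ and $G/\{0,2\}\to G/G$ together with $\AutG$: ruling this out requires tracking that any word reproducing $\tau$ must, after the partial collapse, map the free orbit onto the middle orbit, which the total map is simultaneously required to fix pointwise — a contradiction obtained by following images and stabilizers through the word, not by kernel containment alone. That image-tracking argument (or an equivalent device) is exactly what is missing from your proposal, and without it the induction does not close.
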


An extension of this result in \cite{mini1}, when $ X $  may not be finite, provides an upper bound for the relative rank of $ \EndG $  modulo $ \AutG $  as follows:

$$  \sum_{[H]\in \ConjG}{|U(H)|}-|\kappa_{G}(X)| \leq \mathrm{rank}(End_{G}(X):Aut_{G}(X)). $$

In the article \cite{paper}, we find results like Theorem \cite[Theorem 17]{paper} and its proof, as well as Lemma \cite[Lemma 11]{paper}, which provide us with tools to construct generating sets for the monoid when $ X $  is a finite set and $ G $  is a finite group. However, the generating set proposed in Theorem \cite[Theorem 17]{paper} may not generate the entire $ End_{G}(X) $  modulo $ Aut_{G}(X) $  in the case where $ X $  is an infinite set. We propose a new generating set based on the following constructions. We use constructions analogous to those found in the main theorems of \cite{Ara} and \cite{cite3} for the monoid of complete transformations of a set to achieve our goals in the monoid of $ G $ -equivariant functions of a $ G $ -set $ X $ .

\section{Restriction and Induction of $G$-equivariant Functions}

Consider a finite group $ G $  acting on a set $ X $ . We propose some $ G $ -equivariant functions defined from a set of representatives of the $ G $ -orbits within a set $ \Hbox $ . 

Let $ H $  be a subgroup of $ G $  and let $ \Lambda $  be an index set with the same cardinality as the set of $ G $ -orbits $ \Hbox /G $ . We can fix elements $ x_{\lambda}\in \mathcal{B}_{[H]} $ , $ \lambda \in \Lambda $ , without loss of generality, such that $ Gx_{\lambda}\neq Gx_{\kappa} $  and $ G_{x_{\lambda}}=G_{x_{\kappa}}=H $ , for $ \lambda,\kappa\in \Lambda $ .

Given a fixed subgroup $ H_{i} $ , denote $ X_{i} $  as the set of representatives of the $ G $ -orbits in $ \ibox $ . This construction allows us to state that $ |\ibox|=|X_{i}\times G/H_{i}|=|X_{i}|\cdot [G:H_{i}] $ . If $ |\ibox|=\infty $ , then $ |X_{i}|=\infty $ .

Note that for any element $ z\in \ibox $ , there exist $ g\in G $ , $ \lambda \in \Lambda $ , and $ x_{\lambda}\in X_{i} $  such that $ z=g\cdot x_{\lambda} $ . Given a function $ f:X_{i} \rightarrow X_{i} $ , we define a function $ \widetilde{f}:\ibox \rightarrow \ibox $  as:
$$  \widetilde{f}(z)=\widetilde{f}(g\cdot x_{\lambda})=g\cdot f(x_{\lambda}). $$ 

The function $ \widetilde{f} $  is well-defined because if $ z\in \ibox $  such that $ z=g\cdot x=h\cdot x $ , then there exists $ \lambda \in \Lambda $  and $ g'\in G $  such that $ x=g'\cdot x_{\lambda} $ , implying $ gg'\cdot x_{\lambda}=hg'\cdot x_{\lambda} $ . Without loss of generality, we rewrite this expression as $ g\cdot x_{\lambda}=h\cdot x_{\lambda} $ . Hence, $ h^{-1}g\in G_{x_{\lambda}} $ , and therefore
$$  \widetilde{f}(x_{\lambda})=\widetilde{f}(h^{-1}g\cdot x_{\lambda})=h^{-1}g\cdot f(x_{\lambda}). $$ 
Since $ G $  is finite, $ G_{x_{\lambda}}= G_{f(x_{\lambda})} $ , then
$$  h^{-1}g\cdot f(x_{\lambda})= f(x_{\lambda}) \Rightarrow g\cdot f(x_{\lambda})=h\cdot f(x_{\lambda}) \Rightarrow \widetilde{f}(g\cdot x_{\lambda})=\widetilde{f}(h\cdot x_{\lambda}). $$ 

We must verify that $ \widetilde{f} $  is also $ G $ -equivariant. Let $ z\in \ibox $ , then there exist $ g\in G $ , $ \lambda \in \Lambda $ , and $ x_{\lambda}\in X_{i} $  such that $ z= g\cdot x_{\lambda} $ . It follows that
$$  \widetilde{f}(h\cdot z)=\widetilde{f}(hg\cdot x_{\lambda})= hg\cdot f(x_{\lambda})=h\cdot (g\cdot f(x_{\lambda}))=h\cdot \widetilde{f}(g\cdot x_{\lambda})=h\cdot \widetilde{f}(z). $$ 

Note that $ f $  is the restriction of $ \widetilde{f} $  to the set of representatives of the $ G $ -orbits,
$$  f=\widetilde{f}|_{X_{i}}. $$ 

Another important observation is that a function $ \tau \in \EndH $  may not be well-defined when restricted to the set of representatives of $ G $ -orbits. This happens because if $ x\in X_{i}\subseteq\ibox $ , it is possible that $ \tau(x)\notin X_{i} $ .

\begin{lema}
    Let $ G $  be a finite group, $ H_{i} $  a subgroup, and $ X_{i} $  the set of representatives of $ G $ -orbits in $ \ibox $ , then:
    \begin{enumerate}
        \item[i)] For $ \tau \in \EndH $  such that $ \tau(x)\in X_{i} $  for all $ x \in X_{i} $ , we have $ \widetilde{(\tau|_{X_{i}})}=\tau $ .
        \item [ii)] For $ \sigma \in \EndG $  such that $ \sigma(x)\in \ibox $  for all $ x\in \ibox $ , we have $ \widehat{(\sigma|_{\ibox})}=\sigma $ .
    \end{enumerate}
\end{lema}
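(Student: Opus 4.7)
My approach would be to verify both equalities pointwise, exploiting the orbit-representative decomposition of $\ibox$. Every element $z \in \ibox$ can be written as $z = g \cdot x_\lambda$ with $g \in G$ and $x_\lambda \in X_i$, and both the definition of $\widetilde{\cdot}$ (resp.\ $\widehat{\cdot}$) and the $G$-equivariance of the given function translate this representation into $g \cdot \tau(x_\lambda)$ (resp.\ $g \cdot \sigma(x_\lambda)$) on each side, so agreement follows from a single application of $G$-equivariance.

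For part (i), I would fix an arbitrary $z \in \ibox$, write it as $g \cdot x_\lambda$, and compute $\widetilde{(\tau|_{X_i})}(z) = g \cdot \tau(x_\lambda)$ directly from the defining formula of the induction. On the other side, $G$-equivariance of $\tau$ yields $\tau(z) = \tau(g \cdot x_\lambda) = g \cdot \tau(x_\lambda)$, so the two coincide. The hypothesis $\tau(x) \in X_i$ for $x \in X_i$ plays exactly one role: it guarantees that $\tau|_{X_i}$ is a genuine map $X_i \to X_i$, so that the induction is defined in the first place. Well-definedness of the representation $z = g \cdot x_\lambda$ modulo $G_{x_\lambda}$ was already verified when $\widetilde{f}$ was introduced, so I would cite that verification instead of repeating it.

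For part (ii) the same template applies one step outward: the hypothesis $\sigma(\ibox) \subseteq \ibox$, combined with $G$-equivariance of $\sigma$, makes $\sigma|_{\ibox}$ an element of $\EndH$, and the extension $\widehat{\cdot}$ reconstructs $\sigma$ on $\ibox$ by the same orbit-representative computation used in (i). Agreement on the remainder of $X$ follows immediately from the defining formula of $\widehat{\cdot}$, since that construction leaves the complement of $\ibox$ untouched while $\sigma$'s values there are fixed by its $G$-equivariance and the invariance of the stratification by stabilizer class.

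The main (mild) obstacle is bookkeeping about domains and codomains along the chain "restrict, then extend": one must confirm that each restriction is a legitimate function with the claimed target. The image hypotheses $\tau(X_i) \subseteq X_i$ and $\sigma(\ibox) \subseteq \ibox$ are precisely what keep that chain legitimate, after which the proof collapses into the one-line computation above in each case.
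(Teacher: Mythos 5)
Your part (i) is fine and is the expected computation: write $z=g\cdot x_{\lambda}$ with $x_{\lambda}\in X_{i}$, evaluate $\widetilde{(\tau|_{X_{i}})}(z)=g\cdot\tau(x_{\lambda})$ from the defining formula of the induction (legitimate because $\tau(X_{i})\subseteq X_{i}$), compare with $\tau(z)=g\cdot\tau(x_{\lambda})$ from equivariance, and cite the well-definedness already verified when $\widetilde{f}$ was introduced. (The paper states this lemma without proof, so there is no written argument to compare against.)

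Part (ii), however, has a genuine gap at its final step. The extension $\widehat{\phantom{x}}$ of a $G$-equivariant map of $\ibox$ is, by construction, the identity on $X\setminus\ibox$, so $\widehat{(\sigma|_{\ibox})}$ fixes the complement pointwise. You claim the two maps agree there because $\sigma$'s values on $X\setminus\ibox$ ``are fixed by its $G$-equivariance and the invariance of the stratification by stabilizer class.'' That justification is false: equivariance only yields $G_{x}\leq G_{\sigma(x)}$ (Lemma \ref{lema1}), and it does not force $\sigma$ to act as the identity off $\ibox$; indeed $\sigma$ need not even preserve the other strata, since stabilizers may grow. Concretely, let $G=\mathbb{Z}/2$ act on $X$ with $\ibox$ a stratum of free points and with at least two fixed points; let $\sigma$ be the identity on $\ibox$ and swap two fixed points. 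Then $\sigma\in\EndG$ and $\sigma(\ibox)\subseteq\ibox$, yet $\widehat{(\sigma|_{\ibox})}=\mathrm{id}_{X}\neq\sigma$. So the statement of (ii) as written needs either the additional hypothesis that $\sigma$ is the identity on $X\setminus\ibox$ (which does hold for the maps $\tau'_{i},\tau''_{i}$ to which the lemma is later applied), or its conclusion must be read as an equality of restrictions to $\ibox$; under either reading, your orbit-representative computation on $\ibox$ plus the trivial agreement off $\ibox$ closes the argument. As it stands, your proof asserts the agreement on the complement by appeal to a property that equivariant maps simply do not have.
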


Given a set $ S $  from the monoid $ Trans(X_{i}) $ , denote $ \tilde{S}:=\{\tilde{f}|\ f\in S\} $ . Then for every $ f\in \langle S \rangle $ , we have $ \tilde{f} \in \langle \tilde{S} \rangle $ . Similarly, for a subset $ M $  of $ \EndH $ , denote $ \widehat{M}:=\{\widehat{\tau}|\ \tau \in M\} $ . Then for every $ \tau \in \langle M \rangle $ , $ \widehat{\tau} \in \langle \widehat{M} \rangle $ .

\noindent When a finite group $ G $  acts on a set $ X $ , the following result is well known.

\begin{teorema}
    Let $ G $  be a finite group acting on a set $ X $ , and let $ \tau \in \EndG $ . Then it holds that:
    $$  \tau=\tau_{1}\tau_{2} ... \tau_{r}, $$ 
    where each $ \tau_{i}\in \EndG $  is defined as:
    $$  \tau_{i}(z)= \left\{  \begin{array}{cc}
        \tau(z) &  z\in \ibox, \\ 
        z & \mbox{otherwise.}
    \end{array} \right. $$ 
\end{teorema}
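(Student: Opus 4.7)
The plan is to prove the identity by factoring the problem into two stages: first verify that each $\tau_i$ is a well-defined element of $\EndG$, then evaluate the composition $\tau_1\tau_2\cdots\tau_r$ pointwise on $X$ and check that it agrees with $\tau$.

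For the first stage, I would record that each set $\ibox$ is a union of $G$-orbits, since $G_{g\cdot z}=gG_zg^{-1}$ makes the conjugacy class $[G_z]$ a $G$-invariant of $z$. This makes the piecewise definition of $\tau_i$ compatible with the $G$-action. I would then check $G$-equivariance of $\tau_i$ by cases: if $z\in\ibox$ then $g\cdot z\in\ibox$, so
$$ \tau_i(g\cdot z)=\tau(g\cdot z)=g\cdot\tau(z)=g\cdot\tau_i(z), $$
using that $\tau\in\EndG$; while if $z\notin\ibox$ then $g\cdot z\notin\ibox$ and both sides reduce to $g\cdot z$. Either way $\tau_i\in\EndG$.

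For the second stage, I would fix $z\in X$ and let $j$ be the unique index with $z\in\mathcal{B}_{[H_j]}$. By Lemma \ref{lema1}(i) applied to $\tau$, one has $G_z\leq G_{\tau(z)}$; writing $G_z\in[H_j]$ and $G_{\tau(z)}\in[H_k]$, this embeds a conjugate of $H_j$ into $H_k$, forcing $|H_j|\leq|H_k|$ and hence $k\geq j$ in the chosen size-ordering. I would then trace the composition from right to left: for every $i>j$, the element $z$ lies outside $\ibox$ and is fixed by $\tau_i$, producing $\tau_{j+1}\cdots\tau_r(z)=z$; the step $i=j$ then replaces $z$ by $\tau(z)\in\mathcal{B}_{[H_k]}$; and for every $i<j$, since $k\geq j>i$ we have $\tau(z)\notin\ibox$, so $\tau_i$ fixes $\tau(z)$. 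The end result is $\tau(z)$, establishing the desired identity on all of $X$.

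The main bookkeeping obstacle is confirming that the size-ordering $|H_1|\leq\cdots\leq|H_r|$ faithfully encodes the monotone growth of stabilizers forced by $G$-equivariance: that $\tau$ cannot move an element of $\mathcal{B}_{[H_j]}$ into any $\mathcal{B}_{[H_i]}$ with $i<j$, so that once $\tau_j$ has acted the remaining factors $\tau_{j-1},\ldots,\tau_1$ must all act as the identity on $\tau(z)$. Once this monotonicity is in place, the rest of the argument is a direct pointwise verification.
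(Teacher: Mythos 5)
Your argument is correct and follows essentially the same route as the paper's proof: evaluate the composition pointwise, use that the blocks $\mathcal{B}_{[H_i]}$ partition $X$ and are $G$-invariant so that only $\tau_j$ can move a point $z\in\mathcal{B}_{[H_j]}$, and invoke Lemma \ref{lema1} to see that $\tau(z)$ lies in a block $\mathcal{B}_{[H_k]}$ with $k\geq j$ and is therefore fixed by the remaining factors $\tau_i$, $i<j$ (your additional check that each $\tau_i$ is well defined and $G$-equivariant is a small bonus the paper leaves implicit). The one inference to tighten is ``$|H_j|\leq|H_k|$ hence $k\geq j$'': distinct conjugacy classes may have equal cardinality, so in the tie case argue instead that $G_z\leq G_{\tau(z)}$ together with $|G_z|=|G_{\tau(z)}|$ forces $G_z=G_{\tau(z)}$, hence $[H_j]=[H_k]$ and $j=k$; with that remark (a gap the paper's own wording shares) the proof is complete.
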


\begin{proof}
    Suppose $ z\in\mathcal{B}_{[H_{i}]} $ , then $ \tau_{k}(z)=z $  for all $ k>i $ . By definition, $ \tau_{i}(z)=\tau(z)\in\mathcal{B}_{[H_{j}]} $ , where $ i \leq j $ , by Lemma \ref{lema1}. Thus $ \tau_{k}(\tau_{i}(z))=\tau_{i}(z)=\tau(z) $  for all $ k<i $ , and consequently $ \tau_{1}  \tau_{2}  \cdots  \tau_{r}(z)=\tau(z) $ .
\end{proof}

\section{Case 1: Group as stabilizer}

Consider $ G $  a finite group and $ X $  an infinite $ G $ -set such that $ |\ibox| $  is finite for each subgroup $ H_{i}\leq G $  except the group itself, $ H_{r}=G $ .

Since $ G $  is a finite group, the orbit-stabilizer theorem tells us that each $ G $ -orbit in $ \Gbox $  is a singleton set. Therefore, according to \cite[Proposition 9]{paper}, we ensure that
$$  \EndGbox \cong Trans(\mathcal{B}_{[G]}). $$ 

Given that $ \Gbox $  is an infinite set, by \cite[Theorem 3.3]{cite3}, there exist two functions $ \mu,\nu \in Trans(\Gbox) $  such that
$$  Trans(\Gbox) = \langle Sym(\Gbox), \mu,\nu \rangle, $$ 
thus, we assert that there exist two $ G $ -equivariant functions $ \widetilde{\mu},\widetilde{\nu} \in \EndGbox $  such that
$$  \EndGbox = \langle \AutGbox,\widetilde{\mu},\widetilde{\nu}\rangle. $$ 

For simplicity in notation, if we have a set of representatives of $ G $ -orbits for some $ H_{i} $ , denoted by $ X_{i} $ , and if we have a function $ f: X_{i} \mapsto X_{i} $ , we will simply denote the extension of $ f $  to all of $ X $  as $ \widehat{f}=\widehat{\tilde{f}} $ .

Given the action of $ G $  on $ X $ , we can select without loss of generality the subgroup representatives of the conjugacy classes included in $ \ConjX $  such that there are always containments between these, i.e., $ H_{i}\leq H_{j} $ .

Also, for each $ i=1,2,...,r $ , we can select a representative element from each $ \ibox $ , $ x_{i}\in \ibox $ , such that $ G_{x_{i}}=H_{i} $ , and this is contained in the set of representatives of $ \ibox $ , $ X_{i} $ .

For each $ i=1,2,...,r $  where $ \ibox $  has more than one $ G $ -orbit, we can fix a second element $ x'_{i}\in \ibox $  such that $ G_{x_{i}}=G_{x'_{i}}=H_{i} $ , and it is also in the set of representatives of orbits of $ \ibox $  and furthermore $ G_{x_{i}}\neq G_{x'_{i}} $ .

Furthermore, for each $ i=1,2,...,r $ , consider the different $ N_{i} $ -conjugacy classes $ [K_{i,1}]_{N_{i}},[K_{i,2}]_{N_{i}},...,[K_{i,r_{i}}]_{N_{i}} $ such that $ H_{i}< K_{i,t} $  and $ K_{i,t}\in \StabG $ . Then for each $ j=1,...,r_{i} $ , we can fix elements $ y_{i,j}\in X $  such that $ G_{y_{i,j}}=K_{i,j} $ .

From these fixed elements, we define the following $ G $ -equivariant functions:
$$  [x\mapsto y](z)= \left\{  \begin{array}{cc}
    g\cdot y &  z=g\cdot x, \\
    z & \mbox{otherwise.}
\end{array} \right. $$ 

Note that $ [x_{i} \mapsto y_{i,j}] $  is an elementary collapse of type $ (H_{i},[K_{i,j}]_{N_{i}}) $  and $ [x_{i}\mapsto x'_{i}] $  is an elementary collapse of type $ (H_{i},[H_{i}]_{N_{i}}) $ .

We also define bijective $ G $ -equivariant functions as follows. For $ x,y\in X $  such that $ G_{x}=G_{y} $ , we define:
$$  (x\leftrightarrow y)(z)= \left\{  \begin{array}{cc}
    g\cdot y &  z=g\cdot x, \\
    g\cdot x &  z=g\cdot y, \\
    z & \mbox{otherwise.}
\end{array} \right. $$ 

An important observation is that each of these bijective functions satisfies being its own inverse, i.e., if $ (x \leftrightarrow y) $  exists, then $ (x\leftrightarrow y)^{2}=id_{X} $ .

Denote by $ W $  the set of all elementary collapses formed with fixed elements in $ X $  under the action of $ G $ .

\begin{teorema}\label{version1}
Let $ G $  be a finite group acting on a set $ X $  such that $ |\ibox| $  is finite for each subgroup $ H_{i}\leq G $  except the whole group, $ H_{r}=G $ . Let $ W $  be the set of elementary collapses of fixed elements by the action. Then it holds that
$$  \EndG \neq \langle \AutG \cup W \rangle. $$ 
\end{teorema}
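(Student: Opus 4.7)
The plan is to produce a natural monoid homomorphism $\rho:\EndG\to \EndGbox$ under which every element of $\AutG\cup W$ lands in a proper submonoid, and then exhibit an element of $\EndG$ whose image under $\rho$ escapes this submonoid.

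First, since $X$ is infinite while $|\ibox|<\infty$ for every $i<r$, the set $\Gbox$ is infinite, and its $G$-orbits are singletons, so $\EndGbox\cong \mathrm{Trans}(\Gbox)$. Using Lemma \ref{lema1}, every $\tau\in\EndG$ satisfies $G=G_{z}\leq G_{\tau(z)}$ for $z\in\Gbox$, hence $\tau(\Gbox)\subseteq\Gbox$, and the assignment $\rho(\tau):=\tau|_{\Gbox}$ is a well-defined monoid homomorphism into $\mathrm{Trans}(\Gbox)$. Moreover, any permutation of $\Gbox$ extends $G$-equivariantly to $X$ by the identity on $X\setminus\Gbox$, so $\rho(\AutG)=\Sym(\Gbox)$.

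Next I would analyze $\rho(W)$. Each element of $W$ is an elementary collapse $[x\mapsto y]$ built from the fixed elements $x_{i},x'_{i},y_{i,j}$. If $x\notin\Gbox$, then $Gx$ is disjoint from $\Gbox$, so $\rho([x\mapsto y])$ is the identity on $\Gbox$. If $x\in\Gbox$, then necessarily $y\in\Gbox$ and $Gx=\{x\}$; thus $\rho([x\mapsto y])$ maps $x$ to $y$ and fixes every other element of $\Gbox$, so its defect equals $1$. Because $G$ is finite, the number of conjugacy classes $[H_{i}]$ and the number of classes $[K_{i,j}]_{N_{i}}$ are finite, so $W$ itself is finite, and hence $\rho(W)$ is a finite collection of maps each of defect at most $1$.

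The finish uses the elementary fact that in $\mathrm{Trans}(Y)$ the defect is subadditive under composition and is preserved by composition with permutations. Consequently every element of $\langle \Sym(\Gbox)\cup\rho(W)\rangle$ has finite defect. It only remains to exhibit $\tau\in\EndG$ whose image under $\rho$ has infinite defect: fix $z_{0}\in\Gbox$ and set $\tau(z)=z_{0}$ for $z\in\Gbox$ and $\tau(z)=z$ otherwise. Since $G$ fixes every element of $\Gbox$ pointwise, $\tau$ is $G$-equivariant, and $\rho(\tau)$ is constant on the infinite set $\Gbox$, so $\rho(\tau)$ has infinite defect. Hence $\tau\notin\langle\AutG\cup W\rangle$. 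The main conceptual step is recognizing the restriction map $\rho$ as the correct projection for measuring the obstruction; once in place, the defect bookkeeping is routine, and the rest of the argument consists of the case analysis showing that every collapse in $W$ acts on $\Gbox$ with defect at most one.
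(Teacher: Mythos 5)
Your argument is correct, but it runs along a genuinely different track from the paper's. The paper argues by contradiction with a global defect/surjectivity count on $X$: it picks a surjective, non-injective $\tau\in\EndG$ (which exists because $\Gbox$ is infinite with singleton orbits), notes that every elementary collapse has positive defect, and concludes that any word in $\AutG\cup W$ representing $\tau$ could contain no collapse at all, forcing $\tau\in\AutG$ — a contradiction. You instead localize the obstruction: you build the restriction homomorphism $\rho:\EndG\to\EndGbox\cong\mathrm{Trans}(\Gbox)$ (well-defined since $G=G_z\leq G_{\tau(z)}$ keeps $\Gbox$ invariant), check that $\AutG$ lands in $\Sym(\Gbox)$ and that each collapse in $W$ restricts to a map of defect at most $1$, and then exhibit the ``constant on $\Gbox$, identity elsewhere'' endomorphism, whose restriction has infinite defect and hence cannot be a finite product of finite-defect maps. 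Your route is slightly longer but in one respect more robust: the paper's displayed inequality $def(\tau)\geq\max_i def(f_i)$ is not valid for arbitrary compositions on an infinite set (defects of an injective factor can be cancelled by a later surjective one), and must really be read as ``the image of the composite sits inside a bijective image of the image of the outermost collapse''; your use of subadditivity of defect plus an infinite-defect witness sidesteps that subtlety entirely, and it also makes visible exactly where the generating set fails, namely on $\mathrm{Trans}(\Gbox)$, which is precisely the gap the functions $\widehat{\mu},\widehat{\nu}$ repair in the following theorem. Two small points of hygiene: the equality $\rho(\AutG)=\Sym(\Gbox)$ needs the inclusion $\rho(\AutG)\subseteq\Sym(\Gbox)$ (surjectivity of the restriction follows because $\sigma^{-1}$ is also equivariant and preserves $\Gbox$), whereas your extension-by-identity remark only justifies the reverse inclusion — though for the argument finite defect of $\rho(\sigma)$ is all you use; and the finiteness of $W$ is not actually needed, since any element of the generated monoid is a finite product of generators each of defect at most $1$.
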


\begin{proof}
We will proceed with a proof by contradiction. Suppose $ \AutG \cup W $  does generate the entire monoid. Note that elementary collapses are functions that are neither injective nor surjective; in particular, it holds that $ def([x\mapsto y])>0 $  for any $ x,y\in X $ , since it is not surjective. Let $ \tau \in \EndG $  be a surjective but not injective $ G $ -equivariant function, i.e., $ \tau \in \EndG \setminus \AutG $ . Then $ def(\tau)=0 $ , and it can be expressed as
$$  \tau= f_{1}f_{2}...f_{k}, $$ 
where $ f_{i} \in \AutG \cup W $ . Therefore, it must hold that
$$  def(\tau) \geq \max \{def(f_{1}), def(f_{2}), ..., def(f_{k})\}, $$ 
hence $ def(f_{1})=def(f_{2})=...=def(f_{k})=0 $ . Since no elementary collapse in $ W $  has defect zero, it follows that $ f_{i} \in \AutG $  for all $ i=1,2,...,k $ . This implies that $ \tau = f_{1}f_{2}...f_{k} \in \AutG $ , which contradicts the choice of $ \tau $ .
\end{proof}

Consider the functions $ \mu $  and $ \nu $  which generate $ Trans(X_{r}) $ . Let $ V $  be the set obtained by removing the elementary collapse of type $ (H_{r},[H_{r}]_{N_{r}}) $  from $ W $  and adding the extensions of the functions $ \mu $  and $ \nu $ . That is,
$$  V= \left( W \setminus \{[x_{r} \mapsto x'_{r}]\}\right) \cup \{\widehat{\mu},\widehat{\nu}\}. $$ 

\begin{teorema}
Let $ G $  be a finite group acting on an infinite set $ X $  such that $ |\ibox| $  is finite for each subgroup $ H_{i}\leq G $  except the whole group $ G $ . Then it holds that
$$  \EndG = \langle \AutG \cup V \rangle. $$ 
\end{teorema}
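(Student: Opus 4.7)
The plan is to take an arbitrary $\tau \in \EndG$, decompose it via the factorization theorem of the previous section as $\tau = \tau_1 \tau_2 \cdots \tau_r$ with $\tau_i$ agreeing with $\tau$ on $\ibox$ and fixing $X \setminus \ibox$ pointwise, and then express each factor $\tau_i$ as a product of elements of $\AutG \cup V$. The two cases $i < r$ and $i = r$ require quite different arguments, corresponding respectively to the finite-orbit part of $X$ and to the infinite piece $\Gbox$.

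For $i < r$, the set $\ibox$ is finite by hypothesis, so $\tau_i$ alters only finitely many points. For any orbit $Gx \subseteq \ibox$, Lemma \ref{lema1} forces $\tau_i(Gx) \subseteq \mathcal{B}_{[H_j]}$ for some $j \geq i$. Using automorphisms in $\AutG$, which by Lemma \ref{lema1}(ii) act transitively on orbits within each stabilizer class, one conjugates $x$ to the fixed representative $x_i$ and $\tau_i(x)$ to $y_{i,j}$ (or to $x'_i$ when $j = i$). The effect of $\tau_i$ on that single orbit is thus realized as a conjugate of the elementary collapse $[x_i \mapsto y_{i,j}] \in W$ (or $[x_i \mapsto x'_i] \in W$, which is present since $i < r$). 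Iterating over the finitely many orbits of $\ibox$ and applying the bookkeeping from \cite[Theorem 17]{paper} yields $\tau_i \in \langle \AutG \cup (W \setminus \{[x_r \mapsto x'_r]\}) \rangle \subseteq \langle \AutG \cup V \rangle$.

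For $i = r$, the factor $\tau_r$ is supported on $\Gbox$, which must be infinite since $X$ is infinite while every other $\ibox$ is finite. Because $H_r = G$ makes every $G$-orbit in $\Gbox$ a singleton, one has $\EndGbox \cong \mathrm{Trans}(\Gbox)$, and \cite[Theorem 3.3]{cite3} gives $\mathrm{Trans}(\Gbox) = \langle \Sym(\Gbox), \mu, \nu \rangle$. Hence $\tau_r|_{\Gbox}$ factors as a word in $\mu$, $\nu$, and permutations of $\Gbox$. Extending each factor via $\widehat{\cdot}$ by the identity on $X \setminus \Gbox$ produces $\widehat{\mu}$, $\widehat{\nu}$, and elements of $\AutG$, since any permutation of $\Gbox$ extended by identity on the $G$-invariant complement is a $G$-equivariant bijection of $X$. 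Because every such factor fixes $X \setminus \Gbox$ pointwise, the composition of the extensions equals the extension of the composition, so $\tau_r \in \langle \AutG \cup \{\widehat{\mu}, \widehat{\nu}\} \rangle \subseteq \langle \AutG \cup V \rangle$.

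The main obstacle I anticipate is the bookkeeping in the $i < r$ step: each elementary collapse $[x_i \mapsto y_{i,j}]$ modifies only the single orbit $Gx_i$, so one must interleave carefully chosen automorphisms to realize $\tau_i$'s action on every orbit of $\ibox$ without disturbing orbits already processed. This is precisely the content of \cite[Theorem 17]{paper}, and the finiteness assumption $|\ibox| < \infty$ for $i < r$ is exactly what makes that finite argument applicable in the present infinite-$X$ setting. Concatenating the factorizations obtained for $\tau_1, \ldots, \tau_r$ then expresses $\tau$ as a word in $\AutG \cup V$, which establishes the desired equality.
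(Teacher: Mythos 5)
Your proposal is correct and follows essentially the same route as the paper: decompose $\tau=\tau_{1}\tau_{2}\cdots\tau_{r}$, realize each $\tau_{i}$ with $i<r$ on the finite set $\ibox$ by elementary collapses from $W$ conjugated by suitable automorphisms, and handle $i=r$ by extending the Howie--Ru\v{s}kuc--Higgins generators $\mu,\nu$ of $\mathrm{Trans}(\Gbox)$ to $\widehat{\mu},\widehat{\nu}$. The only organizational difference is that the paper splits each $\tau_{i}$ as $\tau'_{i}\tau''_{i}$ (points staying in $\ibox$ versus leaving it) and treats the within-class part via the wreath-product structure $\AutGx \wr \mathrm{Trans}(\mathcal{O}_{[H_i]})$ with a defect-one map, whereas you absorb that part into the orbit-by-orbit collapse bookkeeping you defer to the finite-case theorem; both hinge on the same finiteness of $\ibox$ for $i<r$.
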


\begin{proof}
Given $ \tau \in \EndG $ , since it can be expressed as
$$  \tau=\tau_{1}\tau_{2}...\tau_{r}, $$ 
it suffices to prove that $ \tau_{i} \in \AutG \cup V $  for all $ i=1,2,...,r $ .

Consider the sets
$$  \mathcal{B}'_{i}:=\{z\in \ibox \mid \tau_{i}(z)\in \ibox\}, $$ 
$$  \mathcal{B}''_{i}:=\{z\in \ibox \mid \tau_{i}(z)\notin \ibox\}. $$ 
Since for each $ i \neq r $ , $ \ibox $  is a finite set, $ \mathcal{B}'_{i} $  and $ \mathcal{B}''_{i} $  are finite and $ G $ -invariant, i.e., composed of a finite number of $ G $ -orbits each. Based on these, we define functions $ \tau'_{i},\tau''_{i}: X \rightarrow X $ , given by
$$  \tau'_{i}(z)= \left\{  \begin{array}{cc}
\tau_{i}(z) &  z\in \mathcal{B}'_{i}, \\ z & \mbox{otherwise.}\end{array} \right., $$ 
$$  \tau''_{i}(z)= \left\{  \begin{array}{cc}
\tau_{i}(z) &  z\in \mathcal{B}''_{i}, \\ z & \mbox{otherwise.}\end{array} \right. $$ 
It is straightforward to verify that $ \tau_{i}=\tau'_{i}\tau''_{i} $ .

To complete the proof of this theorem, it suffices to show that the functions $ \tau'_{i} $  and $ \tau''_{i} $  belong to $ \langle \AutG \cup V \rangle $ .

A particular case in this construction is for $ i=r $ , where $ \mathcal{B}''_{r}= \emptyset $  and $ \tau''_{r}= id_{X} $ . Note that for $ i \neq r $ , as $ \ibox $  is a finite set, $ \mathcal{B}''_{i} $  is a finite and $ G $ -invariant set, i.e., it can be viewed as a union of $ G $ -orbits, specifically a finite number of these. Let's say $ \mathcal{B}''_{i}= \mathcal{O}_{1}\cup\mathcal{O}_{2}\cup...\cup\mathcal{O}_{m_{i}} $ . We can select $ m_{i} $  representatives of these orbits, say $ x_{1}^{i},x_{2}^{i},...,x_{m_{i}}^{i}\in \mathcal{B}''_{i} $ , such that $ G_{x_{k}^{i}}=H_{i} $ , $ k=1,2,...,m_{i} $ . For each $ x_{k}^{i} $ , there exists an index $ j=1,2,...,r_{i} $  such that $ [G_{\tau(x_{k}^{i})}]_{N_{i}}=[K_{i,j}]_{N_{i}} $ . Notice that $ j $  depends both on $ k $  and $ i $ , and there exists $ n\in N_{i} $  such that $ G_{\tau(x_{k}^{i})}=n^{-1}K_{i,j}n =n^{-1} G_{y_{i,j}}n $ . Therefore, it holds that:

\begin{scriptsize}
$$  \tau''_{i}= \prod_{k=1}^{m_{i}}  (x_{i} \leftrightarrow x^{i}_{k})(x_{i} \leftrightarrow n\cdot x_{i})(y_{i,j} \leftrightarrow n^{-1}\cdot \tau(x_{k}^{i}))[x_{i} \mapsto y_{i,j}](y_{i,j} \leftrightarrow n^{-1}\cdot \tau(x_{k}^{i}))(x_{i} \leftrightarrow n\cdot x_{i})(x_{i} \leftrightarrow x^{i}_{k}). $$ 
\end{scriptsize}

As this is a finite product, it shows that $ \tau''_{i}\in \langle \AutG \cup V \rangle $ .

Now, for $ \tau'_{i} $ , consider first that for $ i \neq r $ , the restriction of $ \tau'_{i} $  to $ \ibox $  is contained in a submonoid of $ \EndG $  isomorphic to $ \EndHi $ . If $ \ibox $  has only one $ G $ -orbit, then $ \EndHi = \EndGx = \AutGx $  for some $ x \in \ibox $ , so $ (\tau'_{i}|_{\ibox})\in \AutGx $ , and its extension is an automorphism of $ X $ , i.e., $ \widehat{(\tau'_{i}|_{\ibox})}=\tau'_{i} \in \AutG $ . Then, if $ \ibox $  has more than one $ G $ -orbit, according to Proposition \cite[Proposition 9]{paper}, $ \EndHi \cong \AutGx \wr Trans(\mathcal{O}_{[H_i]}) $ , for any $ x \in \ibox $ . As $ \ibox $  is a finite set, the monoid $ Trans( \mathcal{O}_{[H_i]}) $  is generated by $ Sym(\mathcal{O}_{[H_i]}) $  along with any function of defect 1 (see \cite[Prop. 1.2]{cite3}). For any $ x_{i}, x'_{i} \in \ibox $ , since $ Gx_{i}\neq Gx'_{i} $ , the function $ [x_{i} \mapsto x'_{i}] $  induces, with its restriction, a function over $ \mathcal{O}_{H_i} $  of defect 1. It follows that $ \EndHi $  is generated by $ \AutHi \cup \{[ x_{i} \mapsto x'_{i} ] \vert_{\mathcal{B}_i} \} $ , thus $ (\tau'_{i}|_{\ibox})\in \langle \AutHi \cup \{[ x_{i} \mapsto x'_{i} ] \vert_{\mathcal{B}_i}\rangle $ . Therefore, $ \widehat{(\tau'_{i}|_{\ibox})}=\tau'_{i}\in \langle\AutG \cup \{[x'_{i} \mapsto x'_{i}]\} \rangle \subseteq \langle \AutG \cup V\rangle $ .

For the case where $ i=r $ , $ \rbox $  is infinite, so the monoid $ Trans( \mathcal{O}_{[H_r]}) $  is isomorphic to the monoid of transformations of representatives of $ G $ -orbits in $ \rbox $ , and thus is generated by $ Sym(\mathcal{O}_{[H_r]}) $  along with the functions $ \mu $  and $ \nu $ . This implies that $ \EndHr $  is generated by $ \AutHr $  along with the inductions of the functions $ \mu $  and $ \nu $ , i.e., $ (\tau'_{r}|\rbox)\in \EndHr = \langle\AutHr, \tilde{\mu},\tilde{\nu} \rangle $ , and consequently $ \widehat{(\tau'_{r}|_{\rbox})}= \tau'_{r}\in \langle \AutG \cup \{\widehat{\mu},\widehat{\nu}\}\rangle\subseteq \langle \AutG \cup V \rangle $ .
\end{proof}

\section{Case 2: The Trivial Subgroup}

Consider the same finite group $ G $ , but now $ X $  is an infinite $ G $ -set such that $ |\Hbox| $  is finite for each subgroup $ H \leq G $  except the trivial subgroup $ H_{1} = \{e\} $ .

\begin{teorema}
Let $ G $  be a finite group acting on a set $ X $  such that $ |\ibox| $  is finite for each subgroup $ H_{i} \leq G $  except the trivial subgroup $ H_{1} = \{e\} $ , and let $ W $  be the set of elementary collapses of fixed elements by the action. Then it holds that 
$$  \EndG \neq \langle \AutG \cup W \rangle. $$ 
\end{teorema}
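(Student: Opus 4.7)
The plan is to import the defect argument of Theorem \ref{version1} almost verbatim; the only work is to produce a surjective non-injective witness under the Case 2 hypothesis.

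First, I would exhibit an element $\widehat{f}\in\EndG\setminus\AutG$ with $def(\widehat{f})=0$. Since $|\unobox|$ is infinite and every $G$-orbit in $\unobox$ has size $|G|$, the set $X_{1}$ of orbit representatives is also infinite. Consequently $Trans(X_{1})$ contains a non-injective surjection $f$, by the standard fact that any infinite set admits such a self-map (e.g.\ a shift-with-duplication after identifying $X_{1}$ with $\mathbb{N}\sqcup T$). The induction $\widetilde{f}:\unobox\to\unobox$ inherits surjectivity, since for $z=g\cdot x\in\unobox$ with $f(x')=x$ we have $\widetilde{f}(g\cdot x')=z$, and it inherits non-injectivity, since any pair $x_{1}\neq x_{2}$ in $X_{1}$ with $f(x_{1})=f(x_{2})$ remains a witness for $\widetilde{f}$. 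Extending by the identity outside $\unobox$ yields $\widehat{f}\in\EndG$ with the required properties.

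Next, I would argue by contradiction. Assume $\widehat{f}=f_{1}f_{2}\cdots f_{k}$ with $f_{i}\in\AutG\cup W$. The inclusion $Im(\widehat{f})\subseteq Im(f_{1})$ gives $def(f_{1})\leq def(\widehat{f})=0$, so $f_{1}$ is surjective. However, each elementary collapse $[x\mapsto y]\in W$ is built with $Gx\neq Gy$, so its image is $X\setminus Gx$ and $def([x\mapsto y])=|Gx|>0$; thus $f_{1}\notin W$, forcing $f_{1}\in\AutG$. Left-multiplying by $f_{1}^{-1}\in\AutG$ preserves surjectivity, so $f_{2}\cdots f_{k}=f_{1}^{-1}\widehat{f}$ is again surjective. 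Iterating $k$ times shows that $f_{i}\in\AutG$ for every $i$, whence $\widehat{f}\in\AutG$, contradicting its non-injectivity.

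The only delicate step is producing the witness, but this reduces to the well-known fact that any infinite set admits a non-bijective surjective self-map. All remaining ingredients --- the inductions $\widetilde{\cdot}$ and $\widehat{\cdot}$ from Section 2, and the monotonicity $def(f\circ g)\geq def(f)$ coming from $Im(f\circ g)\subseteq Im(f)$ --- are immediate, so the structure of the proof is formally identical to that of Theorem \ref{version1}.
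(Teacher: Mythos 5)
Your proof is correct and takes essentially the same route as the paper, whose proof of this statement simply invokes the defect argument of Theorem \ref{version1}. Your two additions --- explicitly constructing the surjective, non-injective witness $\widehat{f}$ from the infinite set $X_{1}$ of free-orbit representatives, and replacing the inequality $def(\tau)\geq\max_{i} def(f_{i})$ (which is delicate for infinite $X$) by the iterated peeling argument via $Im(\tau)\subseteq Im(f_{1})$ --- are welcome refinements of the same argument rather than a different approach.
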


\begin{proof}
The argument is exactly the same as for Theorem \ref{version1}.
\end{proof}

Now consider the functions $ \mu $  and $ \nu $  as those generating $ Trans(X_{1}) $ . Also, consider the set $ V $  resulting from removing the elementary collapse of type $ (H_{1}, [H_{1}]_{N_{1}}) $  from $ W $  and adding the extensions of the functions $ \mu $  and $ \nu $ . That is,
$$  V = \left( W \setminus \{[x_{1} \mapsto x'_{1}]\}\right) \cup \{\widehat{\mu}, \widehat{\nu}\}. $$ 

\begin{teorema}
Let $ G $  be a finite group acting on an infinite set $ X $  such that $ |\ibox| $  is finite for each subgroup $ H_{i} \leq G $  except the trivial subgroup $ H_{1} = \{e\} $ . Then it holds that 
$$  \EndG = \langle \AutG \cup V \rangle. $$ 
\end{teorema}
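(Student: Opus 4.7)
The plan is to follow the same strategy as in the positive result of Case~1, with the roles of $H_r = G$ and $H_1 = \{e\}$ interchanged. Given $\tau \in \EndG$, I would first invoke the decomposition theorem to write $\tau = \tau_1 \tau_2 \cdots \tau_r$ and reduce to showing $\tau_i \in \langle \AutG \cup V \rangle$ for each $i$. For every $i \neq 1$ the set $\ibox$ is finite, so the Case~1 argument for the indices $i \neq r$ transfers verbatim: split $\tau_i = \tau'_i \tau''_i$ using the finite sets $\mathcal{B}'_i$ and $\mathcal{B}''_i$, express $\tau''_i$ as a finite product of transpositions $(x \leftrightarrow y)$ and elementary collapses $[x_i \mapsto y_{i,j}]$, and realize $\tau'_i$ via $\AutG$ together with the elementary collapse $[x_i \mapsto x'_i]$, which remains in $V$ since only the $i = 1$ collapse was removed.

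The genuinely new step, and the main obstacle, is the case $i = 1$. Setting
$$  \mathcal{B}'_1 = \{z \in \unobox : \tau(z) \in \unobox\}, \qquad \mathcal{B}''_1 = \{z \in \unobox : \tau(z) \notin \unobox\}, $$
we again write $\tau_1 = \tau'_1 \tau''_1$; but now $\unobox$ is infinite, so $\mathcal{B}''_1$ may contain infinitely many $G$-orbits and $\tau''_1$ may have infinite defect, which means no finite product of elementary collapses alone can reproduce it. The key structural remark is that since $\ibox$ is finite for every $i > 1$, the image $\tau(\mathcal{B}''_1) \subseteq \bigcup_{i > 1} \ibox$ is finite and consists of only finitely many $G$-orbits.

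Exploiting this, I would further factor $\tau''_1 = \phi_1 \phi_2$, where $\phi_2$ fixes $X \setminus \unobox$ pointwise and, on $\unobox$, collapses the (possibly infinitely many) $G$-orbits of $\mathcal{B}''_1$ down to finitely many chosen orbits inside $\unobox$ -- one per target orbit in $\bigcup_{i > 1} \ibox$ hit by $\tau$ -- while $\phi_1$ is the finite product of transpositions and elementary collapses $[x_1 \mapsto y_{1,j}]$ that moves each chosen orbit to its actual target, exactly mimicking the finite formula from Case~1. The restriction $\phi_2|_{\unobox}$ corresponds, under $f \mapsto \widetilde{f}$, to a single element of $\mathrm{Trans}(X_1) = \langle \mathrm{Sym}(X_1), \mu, \nu \rangle$ by \cite[Theorem~3.3]{cite3}, which gives $\phi_2 \in \langle \AutG \cup \{\widehat{\mu}, \widehat{\nu}\} \rangle$. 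Finally $\tau'_1$ is handled exactly as $\tau'_r$ was in Case~1: its restriction to $\unobox$ lies in $\EndHuno \cong \mathrm{Aut}_G(Gx) \wr \mathrm{Trans}(\mathcal{O}_{[H_1]})$ by Proposition~9 of \cite{paper}, a wreath product generated by its units together with (the restrictions of) $\mu$ and $\nu$, so $\tau'_1 \in \langle \AutG \cup \{\widehat{\mu}, \widehat{\nu}\} \rangle \subseteq \langle \AutG \cup V \rangle$. The hard part will be this factorization $\tau''_1 = \phi_1 \phi_2$: one must use the full infinite transformation monoid on $X_1$ to absorb the unboundedness of $\mathcal{B}''_1$ before finitely many elementary collapses can finish the job.
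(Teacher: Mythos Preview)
Your plan is essentially the paper's own proof: decompose $\tau = \tau_1 \cdots \tau_r$, split each $\tau_i = \tau'_i \tau''_i$, handle $i \neq 1$ exactly as in Case~1, and for $i = 1$ use $\widehat{\mu}, \widehat{\nu}$ to absorb the infinitude of $\mathcal{B}''_1$ before finitely many elementary collapses finish the job.

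One small slip to watch: you propose collapsing the orbits of $\mathcal{B}''_1$ to ``one per target \emph{orbit}'' and then assert that $\phi_2|_{\unobox}$ is of the form $\widetilde{f}$ for some $f \in \mathrm{Trans}(X_1)$. But two representatives $x, x' \in X_1$ can have $\tau(x)$ and $\tau(x')$ be \emph{distinct elements} of the same target orbit $\mathcal{O}_t$; if $\phi_2 = \widetilde{f}$ sends both to the same $x_t \in X_1$, no $G$-equivariant $\phi_1$ can separate $\phi_2(x)$ from $\phi_2(x')$ afterwards. The paper sidesteps this by working per target \emph{element}: since $X \setminus \unobox$ is finite it enumerates the elements $z^k_p$, sets $X^1_{k,p} = \{x \in X_1 : \tau(x) = z^k_p\}$, builds $f_{k,p} \in \mathrm{Trans}(X_1)$ collapsing $X^1_{k,p}$ onto $x_1$, and then composes $\widehat{f_{k,p}}$ with transpositions and a single elementary collapse $[x_1 \mapsto y_{1,j}]$. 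This is precisely your $\phi_1\phi_2$ idea executed at the correct granularity, and the fix costs nothing since there are still only finitely many target elements.
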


\begin{proof}
Given an element $ \tau \in \EndG $  and its decomposition $ \tau = \tau_{1}\tau_{2}...\tau_{r} $ , consider again the sets
$$  \mathcal{B}'_{i} := \{z \in \ibox \mid \tau_{i}(z) \in \ibox\}, $$ 
$$  \mathcal{B}''_{i} := \{z \in \ibox \mid \tau_{i}(z) \notin \ibox\}, $$ 
and the functions
$$  \tau'_{i}(z) = \begin{cases}
\tau_{i}(z) & \text{if } z \in \mathcal{B}'_{i}, \\
z & \text{otherwise.}
\end{cases} $$ 
$$  \tau''_{i}(z) = \begin{cases}
\tau_{i}(z) & \text{if } z \in \mathcal{B}''_{i}, \\
z & \text{otherwise.}
\end{cases} $$ 
Thus, $ \tau_{i} = \tau'_{i}\tau''_{i} $ , and we need to show $ \tau'_{i}, \tau''_{i} \in \langle \AutG \cup V \rangle $ .

By arguments similar to those presented in the previous case, it follows that
$$  \tau'_{i} \in \langle \AutG, [x_{i} \mapsto x'_{i}] \rangle, \text{ for } i \neq 1, $$ 
$$  \tau'_{1} \in \langle \AutG, \widehat{\mu}, \widehat{\nu} \rangle, $$ 
and for $ i \neq 1 $ ,
\begin{scriptsize}
$$  \tau''_{i} = \prod_{k=1}^{m_{i}} (x_{i} \leftrightarrow x^{i}_{k})(x_{i} \leftrightarrow n\cdot x_{i})(y_{i,j} \leftrightarrow n^{-1}\cdot \tau(x_{k}^{i}))[x_{i} \mapsto y_{i,j}](y_{i,j} \leftrightarrow n^{-1}\cdot \tau(x_{k}^{i}))(x_{i} \leftrightarrow n\cdot x_{i})(x_{i} \leftrightarrow x^{i}_{k}), $$ 
\end{scriptsize}
which belongs to $\langle \AutG, [x_{i} \mapsto y_{i,j}] \rangle$.\\

The remaining function $ \tau''_{1} $  has a slightly different construction. Notice that by hypothesis, $ X \setminus \unobox $  is finite, so we can enumerate its elements as $ X \setminus \unobox = \{\overline{z}_{1}, \overline{z}_{2}, ..., \overline{z}_{k'}\} $ , which is $ G $ -invariant and can be viewed as a finite union of $ G $ -orbits $ \mathcal{O}_{1}, \mathcal{O}_{2}, ..., \mathcal{O}_{k} $ .

Therefore, we can generate a finite partition of $ \mathcal{B}''_{1} $  given by
$$  \mathcal{B}^{1}_{t} := \{x \in \mathcal{B}''_{1} \mid \tau(x) \in \mathcal{O}_{t}\}, $$ 
so that
$$  \mathcal{B}''_{1} = \bigcup_{t=1}^{k} \mathcal{B}^{1}_{t} = \mathcal{B}^{1}_{1} \cup \mathcal{B}^{1}_{2} \cup ... \cup \mathcal{B}^{1}_{k}. $$ 

Since each $ G $ -orbit is finite, it can be viewed as
$$  \mathcal{O}_{k} = \{ z^{k}_{1}, z^{k}_{2}, ..., z^{k}_{m_{k}} \}. $$ 
Given the set of representatives of $ G $ -orbits in $ \unobox $ , denoted by $ X_{1} $ , we consider the set
$$  X^{1}_{k,p} = \{ x \in X_{1} \mid \tau(x) = z^{k}_{p} \}, $$ 
if $X^{1}_{k,p}$ is not empty, there exists at least one element $a^{k}_{p}\in X^{1}_{k,p}$, and there exist functions $ f_{k,p}: X_{1} \rightarrow X_{1} $ which will be selected in function of the following cases.\\
If $x_{1}\in X^{1}_{k,p}$, we define $f_{k,p}$ as follows:
$$  f_{k,p}(z) = \begin{cases}
x_{1} & \text{if } z \in X^{1}_{k,p} \\
z & \text{otherwise.}
\end{cases} $$ 
If $x_{1}\notin X^{1}_{k,p}$, we define $f_{k,p}$ as:
$$  f_{k,p}(z) = \begin{cases}
x_{1} & \text{if } z \in X^{1}_{k,p} \\
a^{k}_{p} & \text{if } z=x_{1} \\
z & \text{otherwise.}
\end{cases} $$ 
If $X^{1}_{k,p}$ is empty, we consider $f_{k,p}$ as the identity function.\\

Note that $ f_{k,p} \in \langle Sym(X_{1}), \mu, \nu \rangle $, and consequently $ \widehat{f_{k,p}} \in \langle \AutG, \widehat{\mu}, \widehat{\nu} \rangle $. Also note that for each $ z^{k}_{q} $ , there exists a $ y_{i,j} $  such that $ [G_{z^{k}_{q}}]_{N_{1}} = [G_{y_{i,j}}]_{N_{1}} $. This implies there exists an element $ n \in N_{1} $  such that $ G_{z^{k}_{q}} = n G_{y_{i,j}} n^{-1} $. It is also mentioned that the indices $ j $  depends on both $ q $ and $ k $.\\

To simplify notation we built the following functions, based on the same cases that $f_{k,p}$:
If $x_{1}\in X^{1}_{k,p}$
$$\tau_{t,p}= (y_{i,j} \leftrightarrow n^{-1} \cdot z^{t}_{p})[x_{1} \mapsto y_{i,j(t,p)}](y_{i,j} \leftrightarrow n^{-1} \cdot z^{t}_{p})(x_{1} \leftrightarrow n \cdot x_{1}) \widehat{f_{t,p}},$$If $x_{1}\notin X^{1}_{k,p}$
$$\tau_{t,p}= (x_{1} \leftrightarrow a^{k}_{p})(y_{i,j} \leftrightarrow n^{-1} \cdot z^{t}_{p})[x_{1} \mapsto y_{i,j(t,p)}](y_{i,j} \leftrightarrow n^{-1} \cdot z^{t}_{p})(x_{1} \leftrightarrow n \cdot x_{1}) \widehat{f_{t,p}},$$

If $X^{1}_{k,p}$ is empty or such $ y_{i,j} $  does not exist for some $ z^{k}_{q} $ , then $ \tau_{k,q} $  would be considered as the identity function. \\

Therefore, it follows that
$$\tau''_{1}= \prod_{t=1}^{k} \prod_{p=1}^{m_{t}}  \tau_{t,p},$$
which proves the main statement of  this theorem.

\end{proof}

\begin{corolario}
Let $G$ be a finite group acting over a set $X$ in any of the following cases:

\begin{enumerate}
\item[i)] $|\Hbox| < \infty$, $\forall H \in \StabG \setminus \{\langle e \rangle\}$, and $|\mathcal{B}_{[\{e\}]}|= \infty$. 
\item[ii)] $|\Hbox| < \infty$, $\forall H \in \StabG \setminus \{G\}$, and $|\mathcal{B}_{[G]}|= \infty$.

\end{enumerate}
Then it holds that:
$$rank(\EndG : \AutG)=\sum_{[H]\in \ConjX}{|U(H)|}-|\kappa_{G}(X)|+1.$$
\end{corolario}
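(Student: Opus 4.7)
The two theorems just proved in Sections~3 and~4 give $\EndG = \langle \AutG \cup V \rangle$ in both cases. The set $W$ of elementary collapses at the fixed representatives $x_i, x'_i, y_{i,j}$ contains exactly one element per type, so by the lemma counting the types of elementary collapses, $|W| = \sum_{[H]\in \ConjX}|U(H)| - |\kappa_G(X)|$. Since $V$ is obtained from $W$ by removing a single element and inserting the two lifted generators $\widehat{\mu}, \widehat{\nu}$, we have $|V| = \sum_{[H]\in \ConjX}|U(H)| - |\kappa_G(X)| + 1$, yielding
$$\mathrm{rank}(\EndG : \AutG) \leq \sum_{[H]\in \ConjX}|U(H)| - |\kappa_G(X)| + 1.$$

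\textbf{Lower bound.} Let $U \subseteq \EndG$ be any set with $\EndG = \langle \AutG \cup U\rangle$. The extension inequality recalled in the introduction,
$$\sum_{[H]\in \ConjX}|U(H)| - |\kappa_G(X)| \leq \mathrm{rank}(\EndG : \AutG),$$
forces $U$ to contain an elementary collapse of each type. To extract the extra $+1$ I plan to pass to the unique infinite stratum $\mathcal{B}_{\infty}$ ($=\Gbox$ in case~(ii), $=\unobox$ in case~(i)) and use the fact that the induced transformation monoid on $\mathcal{B}_{\infty}$ has relative rank $2$ modulo its group of units. In case~(ii), Lemma~\ref{lema1} supplies a well-defined monoid homomorphism $\pi:\EndG \to \EndGbox \cong \mathrm{Trans}(\Gbox)$ by restriction, sending $\AutG$ into $\AutGbox \cong \mathrm{Sym}(\Gbox)$; by \cite[Thm.~3.3]{cite3} the target has relative rank $2$ over its units, and among the mandatory elementary collapses only the one of type $(G,[G]_{N_G})$ acts non-trivially on $\Gbox$ (all other collapses move elements whose stabilizers are proper subgroups and hence fix $\Gbox$ pointwise), so $U$ must contain at least one further element beyond the elementary collapses.

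\textbf{Case (i) and main obstacle.} The analogous argument for case~(i) cannot proceed by literal restriction, because an endomorphism may push $\unobox$ into the finite complement $X\setminus\unobox$. My plan is to work with the submonoid $\mathcal{M}\leq \EndG$ of endomorphisms that fix $X\setminus \unobox$ pointwise and stabilize $\unobox$; by Proposition~9 of \cite{paper}, $\mathcal{M}\cong \EndHuno \cong G\wr \mathrm{Trans}(X_{1})$, which again has relative rank $2$ modulo units via the projection onto the $\mathrm{Trans}(X_{1})$-factor combined with \cite[Thm.~3.3]{cite3}. The technical core, and the step I expect to require the most care, is showing that an arbitrary generating set $U$ --- after composing its elements with suitable automorphisms so as to reduce into $\mathcal{M}$ --- projects to a generating set of $\mathcal{M}$ modulo its units; once this is granted, only the elementary collapse of type $(\{e\},[\{e\}]_{N_{1}})$ contributes a non-unit under the projection, so the same $+1$ lower bound follows.
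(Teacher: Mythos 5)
Your upper bound is correct and is exactly the route the paper's two generation theorems are set up for: $|V|=\bigl(\sum_{[H]\in\ConjX}|U(H)|-|\kappa_G(X)|\bigr)+1$, so $\mathrm{rank}(\EndG:\AutG)\leq \sum|U(H)|-|\kappa_G(X)|+1$. The gaps are in the lower bound, where what must be shown is that no generating set consisting (together with $\AutG$) of exactly one elementary collapse of each type can generate. In case (ii) your key claim is that among the mandatory collapses only the one of type $(G,[G]_{N_G})$ acts non-trivially on $\Gbox$. That is true for the special maps $[x\mapsto y]$, but the result quoted from \cite{mini1} only forces $U$ to contain \emph{some} elementary collapse of each type, and the definition of an elementary collapse constrains only its kernel and the stabilizer class of the collapsed orbit --- it says nothing about its behaviour away from the two collapsed orbits. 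A collapse of type $(H_i,[K]_{N_i})$ with $H_i<G$ may restrict to $\Gbox$ as a non-identity injective map, even one with infinite defect (a ``shift''), hence as a non-unit of $\mathrm{Trans}(\Gbox)$; then $\pi(U)\setminus\Sym(\Gbox)$ can contain two or more elements even when $U$ consists only of the mandatory collapses, and the appeal to relative rank $2$ yields no contradiction. (For the same reason one cannot simply fall back on the defect argument of Theorem \ref{version1}: on an infinite set a general elementary collapse can even be surjective.) The argument that does work is the invariant the paper prepares in the introduction: every automorphism and every elementary collapse has all kernel classes finite (bounded in terms of $|G|$), finiteness of fibres is preserved under composition, yet $\EndG$ contains maps with an infinite kernel class supported on the infinite stratum (send every $G$-orbit of $\Gbox$, resp.\ of $\unobox$, to one fixed orbit and act as the identity elsewhere); hence any generating set must contain, besides the mandatory collapses, an element that is neither a unit nor an elementary collapse, which gives the $+1$ uniformly in both cases.

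For case (i) you explicitly leave the core step open, and the plan you sketch is unlikely to go through as stated: there is no restriction homomorphism from $\EndG$ to $\EndHuno$ or to $\mathrm{Trans}(X_1)$, precisely because an endomorphism may send part of $\unobox$ into the finite complement, and pre- or post-composing with automorphisms cannot repair this, since automorphisms preserve every stratum $\ibox$. So ``reducing an arbitrary generating set into $\mathcal{M}$ and projecting'' is not available, and that step is a genuine missing idea rather than a technicality. The finite-fibre/infinite-contraction argument above closes case (i) as well, with no projection needed.
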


\end{document}